\newtheorem{theorem}{Theorem}[section]
\newtheorem{lemma}[theorem]{Lemma}
\newtheorem{proposition}[theorem]{Proposition}
\newtheorem{corollary}[theorem]{Corollary}
\newtheorem{conjecture}[theorem]{Conjecture}
\newtheorem{example}[theorem]{Example}
\theoremstyle{definition}
\newtheorem{definition}[theorem]{Definition}
\newtheorem{remark}[theorem]{Remark}
\newcommand{\N}{\mathbb{N}}
\newcommand{\U}{\mathcal{U}}
\newcommand{\W}{\mathcal{W}}
\newcommand{\Sub}[1]{\text{Sub($#1$)}}
\renewcommand{\subset}{\subseteq}
\newcommand{\la}{\langle}
\newcommand{\ra}{\rangle}
\renewcommand{\H}[1]{\text{H($#1$)}}
\newcommand{\minus}{\smallsetminus}
\renewcommand{\epsilon}{\varepsilon}
\renewcommand{\phi}{\varphi}
\newcommand{\trace}[2]{\text{trace}_{#2}\text{($#1$)}}
\newcommand{\acts}{\curvearrowright}
\author{Yair Glasner \and Daniel Kitroser}
\title{A note on LERF groups and generic group actions}
\date{}
\subjclass[2010]{Primary 20E26; Secondary 0B07}%
\keywords{LERF groups, amenable groups, amenable actions.}%
\begin{document}
\begin{abstract}
Let $G$ be a finitely generated group, $\Sub{G}$ the (compact, metric) space of all subgroups of $G$ with the Chaubuty topology and $X!$ the (Polish) group of all permutations of a countable set $X$. We show that the following  properties are equivalent: (i) Every finitely generated subgroup is closed in the profinite topology, (ii) the finite index subgroups are dense in $\Sub{G}$, (iii) A Baire generic homomorphism $\phi: G \rightarrow X!$ admits only finite orbits. Property (i) is known as the LERF property. We introduce a new family of groups which we call {\it{A-separable}} groups. These are defined by replacing, in (ii) above, the word ``finite index'' by the word ``co-amenalbe''. The class of A-separable groups contains all LERF groups, all amenable groups and more. We investigate some properties of these groups. 
\end{abstract}

\maketitle
\section{Introduction}
A group $G$ is said to be LERF (locally extended residually finite)  if every finitely generated subgroup can be separated from any element not contained in that subgroup by a homomorphism into a finite group. This is a strong form of residual finiteness which implies, for instance, that the word problem is solvable for $G$. Trivial examples for LERF groups include finite groups and finitely generated abelian groups. More interesting examples include finitely generated free groups (M. Hall, \cite{H}), surface groups (P. Scott, \cite{S}) and more generally, limit groups (H. Wilton, \cite{W}) as well as the Grigorchuk group (R. Grigorchuk and J. Willson \cite{GW}). More recently, the LERF property was proven for the fundamental group of a closed hyperbolic 3-manifold \cite{A}. In this paper we prove that the group-theoretic LERF property can be characterized via the properties of a Baire generic action of the group on an infinite, countable set. We also link the property of a group $G$ being LERF with the structure of $\Sub{G}$, the space of all subgroups of $G$, taken with the compact, metrizable Chaubuty topology. Explicitly, we show:
\begin{theorem}\label{LERF thm}
Let $G$ be a finitely generated group. Then the following conditions are equivalent:
\begin{enumerate}
\item $G$ is LERF.
\item The set of finite index subgroups of $G$ is dense in $\Sub{G}$.
\item  \label{itm:fin_orb_fg} For a Baire generic action of $G$ on an infinite, countable set all orbits are finite. 
\end{enumerate}
When $G$ is countable but not necessarily finitely generated, a similar equivalence holds, but condition (\ref{itm:fin_orb_fg}) above should be replaced by the somewhat weaker:
\begin{description}
\item[(3')] \label{itm:fin_orb} The set of actions all of whose orbits are finite is dense.
\end{description}
Even in this case, it is still true that generically every finitely generated subgroup $H < G$ admits only finite orbits. 
\end{theorem}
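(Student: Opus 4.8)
The plan is to exhibit a comeager (indeed dense $G_\delta$) set of homomorphisms $\phi\in\mathrm{Hom}(G,X!)$ for which every finitely generated subgroup has only finite orbits. Since $G$ is countable it has only countably many finitely generated subgroups, and $X$ is countable, so it suffices to fix a finitely generated $H=\langle h_1,\dots,h_r\rangle<G$ and a point $x\in X$ and show that
\[ O_{H,x}:=\{\phi\in\mathrm{Hom}(G,X!):\phi(H)x\text{ is finite}\} \]
is comeager; the conclusion is then the countable intersection $\bigcap_{H,x}O_{H,x}$.

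First I would verify that $O_{H,x}$ is \emph{open}. For each $n$ the condition $|\phi(H)x|\le n$ is equivalent to the existence of a set $F\ni x$ with $|F|\le n$ satisfying $\phi(h_i)F=F$ for $1\le i\le r$; each such requirement is clopen and there are finitely many candidates $F$, so $\{|\phi(H)x|\le n\}$ is clopen and $O_{H,x}=\bigcup_n\{|\phi(H)x|\le n\}$ is open. This is exactly where the finite generation of $H$ enters: invariance of a finite set under an infinitely generated group is a closed but not open condition, which is the reason one cannot control all orbits of $G$ itself when $G$ is not finitely generated.

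To prove that $O_{H,x}$ is \emph{dense} I would instead establish the stronger density of $\mathrm{Fin}_x:=\{\phi:\phi(G)x\text{ is finite}\}$; since $\phi(H)x\subset\phi(G)x$ one has $\mathrm{Fin}_x\subset O_{H,x}$. This is the technical heart, and it is precisely the density half of the implication $(2)\Rightarrow(3)$; that construction inspects only finitely much of $G$, so finite generation of $G$ is not needed. Concretely, given a nonempty basic open $U$ prescribing $\phi$ to agree with some $\phi_0$ on a finite symmetric $S\subset G$ over a finite set $A\ni x$, I would set $L_0=\mathrm{Stab}_{\phi_0}(x)$ and use condition $(2)$—density of the finite index subgroups in $\Sub{G}$, valid here since $G$ is LERF—to find a finite index $L<G$ with $L\cap W=L_0\cap W$ for a large enough finite window $W\subset G$. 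The window condition makes the prescribed data on $A$ embed into the finite transitive $G$-set $G/L$ via a local isomorphism of Schreier graphs; transporting $A$ into $G/L$ and retaining $\phi_0$ on the remaining orbits yields $\phi\in U$ whose $G$-orbit of $x$ equals $G/L$, hence is finite.

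Finally $O_{H,x}$ is open and contains the dense set $\mathrm{Fin}_x$, so it is open and dense, hence comeager; intersecting over the countably many pairs $(H,x)$ produces the required comeager set. I expect the only delicate point to be the gluing in the density step—arranging that the transported action agrees with $\phi_0$ at every point of $A$, including those images $\phi_0(s)a$ that fall outside $A$—which is handled routinely by first enlarging $A$ and $W$ to be closed under the finitely many elements of $S$.
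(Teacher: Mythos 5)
Your core construction is correct and is in substance identical to the paper's own proof of the part it covers: the paper also fixes a finitely generated $H$ and $x\in X$, shows $\Delta_{H,x}$ is open, and proves density of the stronger set $\tilde{\Delta}$ of finite-orbit actions by exactly your transport mechanism — picking a finite index $K$ with $K\cap(\widetilde{\Omega}\cdot\widetilde{\Omega})=L\cap(\widetilde{\Omega}\cdot\widetilde{\Omega})$, so that $gK\mapsto\tau(g)x$ is a well-defined injection on a window of $G/K$, then gluing. One local error in your openness step: your claim that $\{\phi : |\phi(H)x|\le n\}$ is clopen because ``there are finitely many candidates $F$'' is false — $X$ is infinite, so there are countably many sets $F\ni x$ with $|F|\le n$. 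The set happens to be clopen anyway, but the count is wrong; since you only need \emph{openness} of $O_{H,x}$, argue as the paper does: if $F=\phi(H)x$ is the finite orbit, then $\mathcal{O}(\phi,\{h_1,\dots,h_r\},F)\subset O_{H,x}$.

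The genuine gap is one of scope: the statement is the full equivalence, and your proposal proves only (2) $\Rightarrow$ (3) (for finitely generated $G$, by taking $H=G$) together with the final claim. You invoke (1) $\Rightarrow$ (2) without proof (``valid here since $G$ is LERF''); this implication needs an argument, which in the paper runs as follows: finitely generated subgroups are dense in $\Sub{G}$ since $\langle L\cap\Omega\rangle\in\W(L,\Omega)$, and then one intersects the separating finite-index subgroups $K_g$ over the finitely many $g\in(G\minus L)\cap\Omega$ to land in $\W(L,\Omega)$. The converse direction (3) $\Rightarrow$ (1) is entirely absent from your proposal: given $L=\la S\ra$ of infinite index and $g\in G\minus L$, one takes the quasi-regular action on $G/L$ and uses density of finite-orbit actions to find $\psi$ agreeing with it on $S\cup\{g\}$ at the coset $L$; the stabilizer $G_L(\psi)$ is then a finite-index subgroup containing $L$ and missing $g$. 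Finally, for countable non-finitely-generated $G$ the theorem asserts (3'), density of actions \emph{all} of whose orbits are finite, and your density step does not deliver this: you ``retain $\phi_0$ on the remaining orbits,'' which may be infinite. The fix (which the paper notes its proof yields) is to apply the transport to each of the finitely many $\phi_0$-orbits meeting $A$ and declare every other point fixed, so the resulting action lies in $\tilde{\Delta}$ itself.
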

Precise definitions of a `generic action' and of the topological spaces involved in this paper are given in section 2. In section 3 we discuss the LERF property and prove Theorem \ref{LERF thm}.  

A subgroup $\Delta < \Gamma$ is called {\it{co-amenable}} if there is a $\Gamma$-invariant mean on $\Gamma/\Delta$. Co-amenable subgroups generalize finite index subgroup in much the same way that amenable groups generalize finite groups. In view of that and of Theorem \ref{LERF thm} we can generalize the notion of LERF groups as follows:
\begin{definition}
A group $G$ is \emph{amenably separable}, or \emph{A-separable} for short, if the set of co-amenable subgroups of $G$ is dense in $\Sub{G}$.
\end{definition} 
In view of the above thoerem every LERF group is $A$-separable. Another obvious example of $A$-separable groups are amenable groups since all of their subgroups are co-amenable. In this paper we initiate the study of A-separable groups. Our hope is that the notion of A-separability will prove to be a useful generalization of the, a-priory very different, properties of LERF and amenability. Similar perhaps to the way sofic groups generalize the the notions of residual finiteness and amenability. The analogue of Theorem \ref{LERF thm} is the following
\begin{theorem}\label{Asep thm}
A countable group $G$ is A-separable if and only if for a generic action of $G$ on , the action on every orbit is amenable.
\end{theorem}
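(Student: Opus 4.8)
The plan is to run the proof in close parallel to Theorem \ref{LERF thm}, replacing "finite index" by "co-amenable" and "finite orbit" by "amenable action on an orbit" throughout. First I would recall the topological setup: the space of actions $\phi\colon G \to X!$ is a Polish space, and the map sending $\phi$ to the stabilizer $\operatorname{Stab}_\phi(x_0)$ of a distinguished basepoint $x_0 \in X$ gives the bridge between actions and the Chabauty space $\Sub{G}$. The key observation, which should already be implicit in the machinery behind Theorem \ref{LERF thm}, is that the point stabilizers of a transitive action encode the action up to conjugacy, so the orbit through $x_0$ is $G$-equivariantly isomorphic to $G/\operatorname{Stab}_\phi(x_0)$; consequently the action of $G$ on that orbit is amenable precisely when $\operatorname{Stab}_\phi(x_0)$ is a co-amenable subgroup of $G$.

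The core of the argument is a genericity-to-density translation in both directions. For the "only if" direction, assuming $G$ is A-separable I want to show that the set of actions whose orbits all carry amenable actions is generic (comeager). I would first establish that the set of co-amenable subgroups, being dense in $\Sub{G}$ by hypothesis, pulls back under the stabilizer map to a suitably rich set of actions; one then expresses "every orbit is amenable" as a countable intersection of conditions indexed by the points of $X$ (or by a countable dense family of them), each of which is an open-dense, or at least comeager, subset of the space of actions. Verifying that each such condition is comeager is where I would lean on the density of co-amenable subgroups together with a Baire-category argument analogous to the one proving (2) $\Rightarrow$ (3) in Theorem \ref{LERF thm}. For the converse, I would contrapose: if some open set $U \subseteq \Sub{G}$ contains no co-amenable subgroup, then the actions whose basepoint-stabilizer lands in $U$ form a set with nonempty interior on which no orbit is amenable, contradicting genericity of amenable-orbit actions.

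The main obstacle I anticipate is that co-amenability, unlike finite index, is \emph{not} a closed or open condition on $\Sub{G}$ in any obvious way, so the clean topological description available for finite-index subgroups (which are isolated-type or at least have good local structure in the Chabauty topology) is unavailable. Concretely, amenability of the orbit $G/\operatorname{Stab}_\phi(x_0)$ is a property defined by the \emph{existence} of an invariant mean, an analytic rather than a manifestly Borel or open condition, so proving that "the orbit through $x$ is amenable" defines a comeager (or even Borel) subset of the action space requires care. I expect to handle this by reformulating amenability through a F{\o}lner-type or Reiter-type sequence of finitely supported almost-invariant functions, giving a countable conjunction of open approximation conditions: the orbit is amenable iff for every $\epsilon > 0$ and every finite $F \subseteq G$ there is a finitely supported probability vector on the orbit that is $(F,\epsilon)$-almost-invariant. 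Each such approximate condition is open in the action, and one checks it is dense using the A-separability hypothesis, so that the full amenability condition becomes a countable intersection of open-dense sets and hence comeager. The delicate point, and the part I would spend the most effort on, is ensuring these Reiter conditions are genuinely \emph{open} in the topology on the action space and that density survives the passage through the stabilizer map; here the finite generation subtlety flagged after Theorem \ref{LERF thm} (finitely generated versus merely countable $G$) will dictate whether one gets genuine genericity of amenable orbits or only the weaker statement about finitely generated subgroups.
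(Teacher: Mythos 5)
Your proposal is correct and follows essentially the same route as the paper: genericity is reduced to the sets $\Sigma(x)$ of actions with amenable orbit through $x$, whose $G_\delta$ structure comes from exactly the F\o lner/Reiter-type reformulation you describe (each ``there is an $(\epsilon,\Omega)$-F\o lner set in the orbit'' condition is open), whose density comes from the density-transfer of the (2) $\Rightarrow$ (3) argument (the paper's Remark \ref{remark1}), and whose converse direction is, in the paper, the direct form of your contrapositive via continuity (and implicit surjectivity) of the stabilizer map $\sigma \mapsto G_x(\sigma)$. Your residual worry about finitely generated versus countable $G$ dissolves precisely as your reformulation suggests, since the F\o lner conditions are indexed by finite subsets of $G$ only, which is why the theorem holds for all countable groups with no analogue of the weakening (3').
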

That is, A-separability is an example of another property that can be characterized via the space of all actions of the group on some fixed countable set. Note that here, in contrast to the LERF property, we no longer see a difference between finitely generated and general countable groups. This is due to the fact that an action of a group is amenable if and only if it's restriction to any finitely generated subgroup is. Here are some properties of A-separable groups.
\begin{theorem} \label{basic_prop}
The following properties hold for the class of A-separable groups: 
\begin{itemize}
\item LERF groups and amenable groups are A-separable, 
\item The class of A-separable groups is closed under free products,
\item There exist A-separable groups which  are neither LERF nor amenable,
\item A group with property (T) is A-separable if and only if it is LERF.
\end{itemize}
\end{theorem}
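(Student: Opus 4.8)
The plan is to prove each of the four bullet points in Theorem \ref{basic_prop} separately, since they are logically independent assertions about the class of A-separable groups.

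\medskip

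\noindent\textbf{LERF and amenable groups are A-separable.} For LERF groups this is essentially a corollary of Theorem \ref{LERF thm}: since finite index subgroups are a fortiori co-amenable, the density of finite index subgroups in $\Sub{G}$ (condition (2)) immediately forces density of the co-amenable subgroups, which is the definition of A-separability. For amenable groups the statement is trivial: if $G$ is amenable then \emph{every} subgroup $\Delta < G$ is co-amenable, because a $G$-invariant mean on $G$ pushes forward to a $G$-invariant mean on $G/\Delta$. Hence the set of co-amenable subgroups is all of $\Sub{G}$, which is certainly dense.

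\medskip

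\noindent\textbf{Closure under free products.} Here I would work through Theorem \ref{Asep thm}: $G = G_1 * G_2$ is A-separable iff a generic action on a countable set restricts to an amenable action on every orbit. The natural strategy is to show that a generic action $\phi$ of $G_1 * G_2$ is, up to conjugacy/enlargement, obtained by freely gluing a generic action of $G_1$ with a generic action of $G_2$, and then to argue that amenability of the orbit-actions is inherited. The key input should be that an action of $G_1 * G_2$ is amenable on an orbit if and only if its restrictions to $G_1$ and to $G_2$ are amenable on the relevant orbits; combined with A-separability of each factor (supplying, on a dense set of partial actions, amenable restrictions) this should yield a dense set of globally-amenable actions of the free product. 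Making the genericity/density argument for the glued action precise, rather than the algebra, is where the real work lies.

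\medskip

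\noindent\textbf{An A-separable group that is neither LERF nor amenable.} For this I would hunt for an explicit example, presumably a free product of amenable groups chosen so that the result is non-amenable (e.g.\ $\Z/2 * \Z/3 \cong \mathrm{PSL}_2(\Z)$, or $\Z * \Z$) and known \emph{not} to be LERF, then invoke the free-product closure bullet together with A-separability of the amenable factors to conclude A-separability. The subtlety is exhibiting non-LERF-ness: one must produce a finitely generated subgroup that is not closed in the profinite topology, so the example must be selected from the literature on non-subgroup-separable groups. A candidate of this flavour should exist among free products of amenable groups.

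\medskip

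\noindent\textbf{Property (T) groups.} One direction is free: a LERF group is always A-separable. For the converse I would use the fact that for a group $G$ with Kazhdan's property (T), co-amenability of a subgroup is extremely rigid: if $\Delta < G$ is co-amenable and $G$ has (T), then the quasi-regular representation on $\ell^2(G/\Delta)$ has almost invariant vectors, and property (T) forces genuine invariant vectors, so $G/\Delta$ carries a finite invariant measure, i.e.\ $\Delta$ has finite index. Thus for a property (T) group the co-amenable subgroups are \emph{exactly} the finite index subgroups, so A-separability (density of co-amenable subgroups) collapses to condition (2) of Theorem \ref{LERF thm}, which is equivalent to LERF. The main obstacle overall is the free-product closure argument, since it requires a careful genericity statement about gluing actions of the factors; the other three bullets reduce cleanly to Theorem \ref{LERF thm}, Theorem \ref{Asep thm}, and the rigidity of co-amenability under property (T).
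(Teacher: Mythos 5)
Your treatments of the first and fourth bullets are correct and essentially identical to the paper's (finite index subgroups are co-amenable; every subgroup of an amenable group is co-amenable; and for property (T), the indicator function of a F\o lner set in $\ell^2(G/\Delta)$ is almost invariant, so (T) forces $G/\Delta$ finite and co-amenability collapses to finite index). The serious gap is in the free-product bullet. The ``key input'' you propose --- that an action of $G_1 * G_2$ is amenable on an orbit if and only if its restrictions to $G_1$ and $G_2$ are amenable on the relevant orbits --- is false in the direction you need. The left regular action of $F_2 = \Z * \Z$ on itself restricts on each free factor to a disjoint union of copies of $\Z$ acting simply transitively on cosets, hence amenably on every orbit; yet the $F_2$-action on its single orbit admits no invariant mean. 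So no ``glue generic actions of the factors'' reduction can work: amenability is exactly the property that fails to survive free gluing, which is why Proposition \ref{Asep product} is the genuinely hard part of the theorem. The paper's proof is instead a direct density argument in $\H{G*K}$: given $\phi * \psi$ and a basic open set, one perturbs the factor actions (using A-separability of each factor) to $\phi', \psi'$ so that a \emph{single} finite set $F$ is simultaneously $(\epsilon,S)$-F\o lner for $\phi'$ and $(\epsilon,T)$-F\o lner for $\psi'$ and lies in the orbit of $x$. In the nontrivial case ($Lx$ contains an infinite $\tau$-orbit) this is achieved by choosing $F$ a large F\o lner set for $\tau$, then conjugating $\sigma$ by an involution $\xi$ that carries most of $F$ onto fixed points of $\sigma$ (supplied by Lemma \ref{lem2}), so that $\phi' = \xi^{-1}\sigma\xi$ nearly fixes $F$ pointwise; a minimal-length element $z$ with $(\sigma*\tau)(z)x \in F$, together with a trace argument showing $\xi$ does not disturb the path from $x$ into $F$, guarantees $F$ remains inside the $(\phi'*\psi')$-orbit of $x$. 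This construction, not any factorwise amenability criterion, is the missing idea.

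There is also a concrete error in your third bullet: both candidates you name, $\Z/2 * \Z/3 \cong \mathrm{PSL}_2(\Z)$ and $\Z * \Z = F_2$, are LERF ($F_2$ by M. Hall's theorem, cited in the paper; free products of LERF groups are LERF), so neither can witness ``A-separable but not LERF.'' Since LERF passes to subgroups, a free-product example must contain a non-LERF factor; the paper's choice is $G = BS(1,n) = \la s,t \mid t^{-1}st = s^n \ra$, which is solvable (hence amenable, hence A-separable) but not LERF because $t^{-1}\la s \ra t = \la s^n \ra \subsetneqq \la s \ra$, so an element of $\la s \ra \minus t^{-1}\la s \ra t$ cannot be separated from the finitely generated subgroup $t^{-1}\la s \ra t$ by a finite quotient. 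Then $G * G$ is A-separable by the free-product bullet, non-LERF since it contains $G$, and non-amenable since it contains a free subgroup of rank two.
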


\section{Basic definitions and facts}
Throughout this paper, $X$ is a countable set and $G$ is a countable group where ``countable" will always mean infinite and countable.

\begin{itemize}
\item $\Sub{G}$ will denote the space of all subgroups of $G$ equiped with the Chaubuty topology. Since $G$ is discrete and countable, a basis to the Chaubuty topology on $\Sub{G}$ can be given by the sets:
 \[ \W(H,\Omega) = \{ K\in\Sub{G}\mid K\cap \Omega = H\cap \Omega\}\ \big( H\in\Sub{G}, \Omega\subset G\text{ finite}. \big)\] $\Sub{G}$ is a compact metrizable space.

\item $X!$ denotes the symmetric group of $X$, endowed with the toplogy of pointwise convergence, or in other words, a basis for the topology is given by the sets 
\[ \U(\sigma,A) =\{ \tau\in X!\mid \tau a = \sigma a \text{ and } \tau^{-1} a = \sigma^{-1} a, \forall a\in A\}\ \big( \sigma\in X!, A\subset X\text{ is finite} \big).\] It is straightforward to show that $X!$ is a Polish group (see \cite{D} for more detail on this group). 

\item Let $\H{G} := \text{Hom}(G,X!)$ denote the set of all permutation representations (or \emph{actions}) of $G$ on $X$. $\H{G}$ is a closed subset of $X!^{|G|}$ (taken with the product topology) hence, it is a Polish space. Explicitly, a basis for the topology of $\H{G}$ is given by the sets:
\[  \mathcal{O}(\phi,S,A) = \{\sigma\in\H{G} \mid \forall s\in S\ \forall a\in A : \sigma(s)a = \phi(s)a \}\ \big(\phi\in\H{G}, S\subset G\text{ finite}, A\subset X\text{ finite}\big).\]

\item If $\sigma\in\H{G}$ and $x\in X$ then $G\overset{\sigma}{\acts} X$ denotes the $G$-action on $X$ defined by $\sigma$. $G_x(\sigma)$ will denote the stabilizer of $x$ in $G$ with respect to the action defined by $\sigma$, and $\sigma(G)x$ will denote the orbit. 
\end{itemize}
\begin{lemma}\label{lem1}
For every $x\in X$, the map:
\begin{align*}
&\H{G} \to \Sub{G}\\
&\sigma \mapsto G_x(\sigma)
\end{align*}
is continuous.
\end{lemma}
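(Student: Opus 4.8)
The plan is to verify continuity directly at the level of basic open sets, by showing that the preimage of every basic neighborhood $\W(H,\Omega)$ of $\Sub{G}$ is open in $\H{G}$. Equivalently, I would fix an arbitrary $\sigma_0\in\H{G}$ whose image $G_x(\sigma_0)$ lies in some basic neighborhood $\W(H,\Omega)$, and then exhibit an explicit basic neighborhood $\mathcal{O}(\sigma_0,S,A)$ of $\sigma_0$ that the map sends entirely into $\W(H,\Omega)$.

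The key observation is that whether a given $g\in G$ stabilizes $x$ depends on $\sigma$ only through the single value $\sigma(g)x\in X$: indeed $g\in G_x(\sigma)$ if and only if $\sigma(g)x=x$. Since $\W(H,\Omega)$ constrains a subgroup only through its intersection with the finite set $\Omega$, the membership questions $g\in G_x(\sigma)$ for $g\in\Omega$ are governed by the finitely many coordinates $\{\sigma(g)x : g\in\Omega\}$. This is exactly the kind of finite data that a basic open set in $\H{G}$ is designed to fix.

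Concretely, I would take $S=\Omega$ and $A=\{x\}$. For any $\sigma\in\mathcal{O}(\sigma_0,\Omega,\{x\})$ the defining relations give $\sigma(g)x=\sigma_0(g)x$ for every $g\in\Omega$; hence $\sigma(g)x=x$ precisely when $\sigma_0(g)x=x$, which says $G_x(\sigma)\cap\Omega=G_x(\sigma_0)\cap\Omega$. Because $G_x(\sigma_0)\in\W(H,\Omega)$ means $G_x(\sigma_0)\cap\Omega=H\cap\Omega$, we conclude $G_x(\sigma)\cap\Omega=H\cap\Omega$, i.e. $G_x(\sigma)\in\W(H,\Omega)$. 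Thus $\mathcal{O}(\sigma_0,\Omega,\{x\})$ maps into $\W(H,\Omega)$, establishing openness of the preimage and hence continuity.

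I expect no serious obstacle here; the statement is essentially an unwinding of the two topologies. The only point requiring a moment's care is the correct matching of the finite data defining the basic neighborhoods, namely choosing the finite set $S\subset G$ of tested group elements to be exactly $\Omega$ and the finite set $A\subset X$ to be the singleton $\{x\}$, so that agreement of $\sigma$ with $\sigma_0$ on $S\times A$ suffices to detect membership in the stabilizer over all of $\Omega$.
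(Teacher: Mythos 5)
Your proposal is correct and coincides with the paper's own proof: the paper also observes that for $\sigma\in\mathcal{O}(\phi,\Omega,\{x\})$ one has $g\in G_x(\sigma)\Leftrightarrow\sigma(g)x=\phi(g)x=x\Leftrightarrow g\in G_x(\phi)$ for all $g\in\Omega$, hence $G_x(\sigma)\in\W(G_x(\phi),\Omega)$. Your choice $S=\Omega$, $A=\{x\}$ is exactly the matching of finite data used there, so there is nothing to add.
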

\begin{proof}
Let $\phi\in\H{G}$ and let $\Omega\subset G$ be a finite set. If $\sigma\in \mathcal{O}(\phi,\Omega,\{x\})$ then \[\forall g\in\Omega: g\in G_x(\sigma) \Leftrightarrow x = \sigma(g)x = \phi(g)x \Leftrightarrow g\in G_x(\phi)\] i.e $G_x(\sigma)\in\W(G_x(\phi),\Omega)$.
\end{proof}
\begin{definition}
Let $v$ be a word in $n$ variables, $x\in X, \varphi_1,\dots,\varphi_n \in X!$ and write $v(\varphi_1,\dots,\varphi_n) = v_k\cdots v_1$. The \emph{trace} of $x$ under $\bar{v} = v(\varphi_1,\dots,\varphi_n)$ is the ordered set of points:
\[ \text{trace}_{\bar{v}}(x) := \{ x, v_1 x,\dots, v_{k-1}v_{k-2}\cdots v_1 x, \bar{v}x\}. \]
\end{definition}
\begin{lemma}\label{lem2}
To a given $\sigma \in \H{G}$ there are arbitrarily close actions, with infinitely many fixed points. \end{lemma}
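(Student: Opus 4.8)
Since closeness in $\H{G}$ is measured by the basic sets $\mathcal{O}(\sigma,S,A)$ with $S\subset G$ and $A\subset X$ finite, the plan is to fix one such neighborhood and produce a member of it with infinitely many global fixed points (a point $x$ being a global fixed point of $\tau$ means $\tau(g)x=x$ for all $g\in G$, i.e.\ $G_x(\tau)=G$). The only real constraint imposed by $\mathcal{O}(\sigma,S,A)$ is that $\tau(s)a=\sigma(s)a$ for all $s\in S$, $a\in A$; so the finite set $A_1 := A\cup\sigma(S)A$ is the only region where we may not alter the value of the action. Everything outside $A_1$ is free, and the whole countable set $X$ outside this finite piece is available room in which to manufacture fixed points.

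The key move is to graft an infinite set of trivial points and then relabel. I would form the disjoint union $X' := X\sqcup\N$ and the action $\sigma' := \sigma\sqcup\mathbf{1}$, where $\mathbf{1}$ is the trivial (identity) action on the new copy of $\N$; this is genuinely a homomorphism $G\to X'!$, being a direct sum of two actions, and every point of $\N$ is a global fixed point of $\sigma'$. Since $X'$ and $X$ are both countably infinite and $A_1$ is finite, I can choose a bijection $\beta\colon X'\to X$ that restricts to the identity on $A_1$ (extending the identity on $A_1$ to a bijection of the remaining countable complements arbitrarily). I then transport the action by conjugation, setting $\tau(g) := \beta\,\sigma'(g)\,\beta^{-1}\in X!$.

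It remains only to verify the three required properties. First, $\tau$ is a homomorphism $G\to X!$, as a $\beta$-conjugate of the homomorphism $\sigma'$, so $\tau\in\H{G}$. Second, the images $\beta(\N)$ are all global fixed points of $\tau$ since the points of $\N$ are fixed by every $\sigma'(g)$, and there are infinitely many of them because $\beta$ is injective. Third, $\tau\in\mathcal{O}(\sigma,S,A)$: for $a\in A$ and $s\in S$ we have $\beta^{-1}(a)=a$ (as $a\in A_1$), then $\sigma'(s)a=\sigma(s)a\in A_1$, and finally $\beta$ fixes $\sigma(s)a$, so $\tau(s)a=\sigma(s)a$. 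As $\mathcal{O}(\sigma,S,A)$ was an arbitrary basic neighborhood, such actions lie arbitrarily close to $\sigma$.

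I expect the main obstacle to be a tempting but wrong simplification: one might hope to realize the local data of $\sigma$ on $A_1$ inside a \emph{finite} invariant set and declare all remaining (cofinitely many) points fixed. That route collapses the construction into a finite $G$-action agreeing with $\sigma$ on $(S,A)$, whose existence for arbitrary countable $G$ is exactly a residual-finiteness/LERF-type condition and is therefore unavailable in this generality. The construction above sidesteps this by using an infinite \emph{trivial summand} adjoined disjointly, rather than truncating or folding the (possibly infinite) $\sigma$-orbits meeting $A$; the conjugation by $\beta$ is precisely what keeps $\tau$ a bona fide homomorphism while letting the grafted fixed points be reabsorbed into the fixed set $X$.
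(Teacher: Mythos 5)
Your proof is correct and is essentially identical to the paper's own argument: both adjoin a countable set of trivial points to form the action $\sigma\sqcup\mathbf{1}$ on $X\sqcup\N$ and conjugate back by a bijection that is the identity on the finite set $A\cup\sigma(S)A$ controlling the basic neighborhood. The only differences are cosmetic (the direction of the bijection and your added sanity checks), so nothing further is needed.
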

\begin{proof}
Given finite sets $S \subset G, Y \subset X$ we seek an action $\sigma' \in \mathcal{O}(\sigma,S,Y)$ with infinitely many fixed points. Consider the action $(\sigma, \operatorname{1}): G \acts X \sqcup \N$, obtained from $\sigma$ by augmenting countably many fixed points. The desired action $\sigma' = \phi^{-1} (\sigma,\operatorname{1}) \phi$ is obtained by intertwining this action via any bijection $\phi: X \to X \sqcup \N$ with the property that $\phi$ is the identity when restricted to $Y\cup \big(\bigcup_{s\in S} \sigma(s)Y\big)$.
\end{proof}
A subset of a Baire space is called \emph{generic} if it contains a countable intersection of dense, open sets. Given a Polish space $Y$ and a property P, we will say that ``a generic element of Y has property P" if P holds for all elements of some generic subset of Y.

\section{The LERF property}
\begin{definition}
A group $G$ is \emph{locally extended residually finite} or LERF, if for every finitely generated $K\leq G$ and $g\in G\minus K$ there exists a finite index subgroup of $G$ that contains $K$ but does not contain $g$.
LERF groups are also called \emph{subgroup separable} in the literature.
\end{definition}
\begin{proof}[proof of Theorem \ref{LERF thm}]
We will prove the the statement about countable groups, from which the statement about finitely generated groups directly follows. 

{\textit{(1) $\implies$ (2)}}. Let $L\in \Sub{G}$. We want to approximate $L$ by a finite index subgroup of $G$. We can assume that $L$ is finitely generated since the finitely generated subgroups are always dense in $\Sub{G}$. Indeed, given any finite subset $\Omega \subset G$, we have $\langle L \cap \Omega \rangle \in \W(L,\Omega)$. Now for every $g \in (G \minus L) \cap \Omega$, let $K_g\in \Sub{G}$ be the finite index subgroup, given by the LERF property, such that $L\subset K_g$ and $g \notin K_g$. Then $K = \bigcap_{g \in (G \setminus L) \cap\Omega} K_g$ is of finite index in $G$ and $K\in\W(L,\Omega)$.

{\textit{(2) $\implies$ (3)}}. Denote
\begin{align*}
& \Delta = \{\sigma \in \H{G} \ | \ \text{every f.g. subgroup has only finite orbits}\} = \bigcap_{x \in X, \ H \stackrel{\text{f.g.}}{<} G}\Delta_{H,x}  \\
& \Delta_{H,x} = \left \{ \left. \sigma\in\H{G} \ \right| \text{ for every f.g. } H<G, \left| \sigma(H)x \right| < \infty  \right \}\ (x\in X). 
\end{align*}
Since this is a countable intersection, by Baire's theorem, it is enough to fix a subgroup $H$ generated by a finite set $S$ and a point $x \in X$ and show that $\Delta_{H,x}$ is open and dense. Given $\phi \in \delta_{H,x}$ let us denote by $A = \phi(H)x$ the given finite orbit then $\phi \in \mathcal{O}(\phi,S,A) \subset \Delta_{H,x}$ is an open neighborhood, proving that $\Delta_{H,x}$ is open. For the density statement we need to show that $\tilde{\Delta} := \left\{ \sigma \in \H{G} \ | \ \sigma(G) {\text{ has only finite orbits}} \right\} \subset \Delta$ is dense in $\H{G}$. 
Let $\tau\in\H{G}$ be such that the $\tau$-orbit of $x$ is infinite and $L=G_x(\tau)$ be the corresponding stabilizer; we seek $\sigma \in \mathcal{O}(\tau,S,A) \cap \tilde{\Delta}$ where $S \subset G$ and $A \subset X$ are given finite subsets. After possibly enlarging $A$ we may assume without loss of generality that $x \in A$. We also assume without loss of generality that $\tau$ is a transitive action, since we can approximate $\tau$ on the orbit $\tau(G)x$, leaving all other orbits untouched. 

Let $\Omega \subset G$ be a finite set such that $A \subset \tau(\Omega)x$ and let $\widetilde{\Omega}\subset G$ be a finite symmetric subset containing the identity element such that $\Omega \cup \big(\bigcup_{s\in S} s\Omega\big)\subset \widetilde{\Omega}$. By hypothesis, there exists $K\in\Sub{G}$ of finite index in $G$ such that $K\cap (\widetilde{\Omega}\cdot\widetilde{\Omega}) = L\cap (\widetilde{\Omega}\cdot\widetilde{\Omega})$. This implies that the function given by $gK\overset{f}{\mapsto} \tau(g)x$ is well defined and one-to-one on the set $\widetilde{\Omega}K = \{gK\ |\ g\in \widetilde{\Omega}\} \subset G/K$. Now, extend $f$ to an injective map $\tilde{f}: G/K \rightarrow X$, and choose an action $\sigma\in\H{G}$ such that $\tilde{f}$ becomes a $G$-map with respect to the (left) regular action of $G$ on $G/K$. This can be done for example by setting
\[ \sigma(g)y =
\begin{cases}
\tilde{f}g\tilde{f}^{-1}y, & y\in\tilde{f}(G/K).\\
y, &\text{otherwise}.
\end{cases}\]
It is easy to verify that $K = G_x(\sigma)$ thus the orbit of $x$ under $\sigma(G)$ is finite. All other orbits are just singletons. Now for every $s\in S$ and $y \in A$, by our choice of $\Omega$ we have $y = \tau{\omega} x $ for some $\omega \in \Omega$. Thus
$$\sigma(s) y = \tilde{f}s \tilde{f}^{-1} y = f s f^{-1} y = f s f^{-1} \tau(\omega) x = f s \omega K = \tau(s \omega) x = \tau(s) y.$$ Note that the exact same proof shows that the collection $\{\phi \in \H{G} \ | \ {\text{ all orbits of $G$ are finite}}\}$ is dense in $\H{G}$, thus concluding the proof. 

{\textit{(3) $\implies$ (1)}}. Let $S\subset G$ be finite, $L = \la S\ra$ and $g\in G\minus L$. We wish to find a finite index subgroup $K\in\Sub{G}$ such that $L\leq K$ and $g\notin K$. If $L$ is of finite index in $G$ we are finished so assume $[G:L] = \infty$. Let $\phi: G\to (G/L)!$ be the regular representation of $G$ on $G/L$. By hypothesis, there exists a representation $\psi: G\to (G/L)!$ with only finite orbits such that $\forall s\in S: \psi(s)L = \phi(s)L = sL = L$ and $\psi(g)L = \phi(g)L = gL \neq L$. The stabilizer $G_L(\psi)$ is of finite index in $G$, contains $S$ (and thus contains $L$) and does not contain $g$.
\end{proof}
\begin{remark}\label{remark1}
In the proof of the last theorem, the proof that (2) implies (3) shows in fact that if $\mathcal{D}\subset\Sub{G}$ is dense in $\Sub{G}$ then for any $x\in X$, the set $\{\sigma\in\H{G}\mid G_x(\sigma)\in\mathcal{D}\}$ is dense in $\H{G}$.
\end{remark}
In order to demonstrate the use of the Theorem \ref{LERF thm} we give a short proof that finitely generated free groups are LERF using the language of generic permutation representations:

\begin{proposition} \label{prop:FnLERF}
$F_n$, the free group on $n$ generators is LERF.
\end{proposition}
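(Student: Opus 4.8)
The plan is to invoke Theorem \ref{LERF thm}: since $F_n$ is finitely generated, it is LERF if and only if a Baire generic action on an infinite countable set $X$ has only finite orbits, and by condition (3$'$) it suffices to show that the set of actions all of whose orbits are finite is dense in $\H{F_n}$. The decisive feature of the free group is that $\H{F_n} = \operatorname{Hom}(F_n, X!)$ is canonically the \emph{unconstrained} product $(X!)^n$: an action $\phi$ is nothing but an arbitrary choice of permutations $\phi_i := \phi(x_i)$ of $X$, one for each free generator, with no relations to respect. Thus producing an approximating action with finite orbits amounts to modifying finitely many permutations off a finite set.

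First I would fix a basic open set $\mathcal{O}(\phi,S,A)$ and reduce the agreement requirement on the finite set $S$ of words to an agreement requirement on the generators over a larger finite subset of $X$. Concretely, I would let $B\subset X$ be the finite union of the traces of the points of $A$ along the words $s\in S$ (in the sense of the Definition preceding Lemma \ref{lem2}); then $B$ is finite, contains $A$, and is closed under the forward and backward generator-steps appearing in these words. The point of passing to $B$ is that if a new action agrees with $\phi$ on the generators over $B$ (and likewise for their inverses), then, reading each word $s\in S$ letter by letter, it will automatically agree with $\phi(s)$ on all of $A$.

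Next I would carry out the finite completion. Choosing a finite set $F\supset B\cup\bigcup_i\big(\phi_i B\cup\phi_i^{-1}B\big)$, each $\phi_i$ restricts to an injective partial permutation of $F$ (with domain $B\cup\phi_i^{-1}B$). Since a partial injection of a finite set always extends to a full permutation of that set, I can extend each to a permutation $\bar\sigma_i$ of $F$ and then set $\sigma_i=\bar\sigma_i$ on $F$ and $\sigma_i=\mathrm{id}$ on $X\minus F$. The subgroup $\la\sigma_1,\dots,\sigma_n\ra$ then preserves the finite set $F$ and fixes its complement pointwise, so every orbit lies in $F$ or is a singleton; in particular all orbits are finite. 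By construction $\sigma_i$ agrees with $\phi_i$ (and $\sigma_i^{-1}$ with $\phi_i^{-1}$) on $B$, so $\sigma\in\mathcal{O}(\phi,S,A)$, which establishes the density and hence the proposition.

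The step I expect to require the most care is the bookkeeping in the second paragraph: making sure $B$ is genuinely closed under the relevant generator-steps so that agreement on generators upgrades to agreement on every $s\in S$ over $A$, and making sure $F$ is large enough that the inverse conditions implicit in the topology of $X!$ are also met. None of this is deep, but it is the place where the argument could silently go wrong; the free-group case is otherwise essentially a one-line consequence of Theorem \ref{LERF thm} once one observes that a permutation of a finite set can absorb any prescribed finite partial data without creating infinite orbits.
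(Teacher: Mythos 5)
Your proof is correct and takes essentially the same approach as the paper: both exploit the identification $\H{F_n} = X!^n$ to approximate a given action by permutations supported on a finite set (so that every orbit is contained in that set or is a singleton) and then conclude via Theorem \ref{LERF thm}. The only differences are cosmetic: the paper proves full genericity of the finite-orbit set (density plus $G_\delta$, i.e.\ condition (3)) using a per-generator patch sending $\varphi_i(A)\minus A$ bijectively onto $A\minus\varphi_i(A)$, whereas you prove density alone and invoke condition (3$'$), which indeed suffices since the countable-group version of the theorem applies to $F_n$, at the cost of the extra (correctly handled) trace bookkeeping needed to reduce agreement on words $s\in S$ to agreement on generators.
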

\begin{proof}
First notice that $\H{F_n} = X!^n$ and so it is enough to show that 
\[\Sigma := \{(\sigma_1,\dots,\sigma_n)\in X!^n\mid \langle \sigma_1,\dots,\sigma_n\rangle\text{ has only finite orbits}\}\]
is generic in $X!^n$. We first show that $\Sigma$ is dense in $X!$. Let $(\varphi_1,\dots,\varphi_n)\in X!^n$ and $A\subset X$ be finite. For $\sigma\in X!$ we define the \emph{support} of $\sigma$ as: $\text{supp}(\sigma) = \{ x\in X \mid \sigma x\neq x\}$. For every $i=1,\dots,n$ there exists $\sigma_i\in X!$ such that $\sigma_i\big|_{A} = \varphi_i\big|_{A}$ and $\sigma_i$ has finite support. To see this notice that since $|A|= |\varphi_i(A)|$ we have
that $|\varphi_i(A)\minus A| = |A\minus\varphi_i(A)|$ so we can define $\sigma_i$ to coinside with $\varphi_i$ on $A$, take $\varphi_i(A)\minus A$ bijectively onto $A\minus\varphi_i(A)$ and fix every other point. Finally we notice that every orbit of $\langle \sigma_1,\dots,\sigma_n\rangle$ is finite: if $a\in \bigcup_i \text{supp}(\sigma_i)$ then the orbit of $a$ is contained in the finite set $\bigcup_i \text{supp}(\sigma_i)$ and otherwise, $a$ is fixed by $\langle \sigma_1,\dots,\sigma_n\rangle$.

We now prove that $\Sigma$ is $G_{\delta}$. Consider the sets 
\[\Sigma_x :=  \{(\sigma_1,\dots,\sigma_n)\in X!^n\mid \text{the orbit of $x$ under }\langle \sigma_1,\dots,\sigma_n\rangle\text{ is finite}\},\ (x\in X).\]
Obviously, $\Sigma = \bigcap_{x\in X}\Sigma_x$. Also, every $\Sigma_x$ is open: let $(\sigma_1,\dots,\sigma_n)\in\Sigma_x$ and let $A$ be the finite orbit of $x$ under $\langle \sigma_1,\dots,\sigma_n\rangle$. Then 
\[ \{ (\psi_1,\dots,\psi_n)\in X!^n\ |\ \forall i: \psi_i\big|_A =  \sigma_i\big|_A,\ \psi_i^{-1}\big|_A =  \sigma_i^{-1}\big|_A\} \]
is an open neighborhood of $(\sigma_1,\dots,\sigma_n)$ that is contained in $\Sigma_x$.
\end{proof}
\begin{example}
The free group $F_{\infty}$ on a countable number of generators is LERF. But a generic action of $F_{\infty}$ on a countable set $X$ is transitive. 
\end{example}
\begin{proof}
The proof that $F_{\infty}$ is LERF - and hence satisfies all the conditions (1),(2),(3') of Theorem \ref{basic_prop} is similar to that given for finitely generated free groups in Proposition \ref{prop:FnLERF} - we leave the details to the reader. 

Now by Baire's theorem, in order to prove transitivity it is enough to show that the set $\Theta(x,y) = \{ \phi \in \H{F_{\infty}} \ | \ y \in \phi(F_{\infty})x \}$ is open and dense. Indeed for $\phi \in \Theta(x,y)$ there is a specific element $\omega \in F_{\infty}$ such that $\phi(\omega)x = y$ and the inclusion $\phi \in \mathcal{O}(\phi,\{\omega\},\{x\}) \subset \Theta(x,y)$ proves that this set is open. 

Let $F_{\infty} = \langle x_1,x_2,\ldots \rangle$ be a free generating set for $F_{\infty}$ and let $F_r = \langle x_1,x_2,\ldots,x_r \rangle$ be the free group generated by the first $r$-generators.  For the density note that if $S \subset F_{\infty}$ and $A \subset X$ are finite then in fact $S \subset F_r$ for any large enough value of $r$. Now if $\mathcal{O}(\phi, S,A)$ is the basic open set defined by these choices then we can define $\sigma \in \mathcal{O}(\phi,S,A)$ by setting $\sigma(x_i) = \phi(x_i)$ for every $1 \le i \le r$ and then defining $\sigma(x_{r+1})$ in such a way that $\sigma(x_{r+1})x = y$. 
\end{proof}

\section{A-separability}\label{Asep}
\begin{definition}
An action $G \acts Y$ of a discrete, countable group $G$ is called \emph{amenable} if it satisfies any one of the following equivalent conditions:
\begin{itemize}
\item For every $\epsilon > 0$ and $\Omega\subset G$ finite, $Y$ admits an \emph{$(\epsilon,\Omega)$-F\o lner} subset, that is, a finite set $F\subset Y$ such that $\dfrac{|gF\Delta F|}{|F|} < \epsilon$ for all $g\in \Omega$.
\item There exists a finitely additive $G$-invariant probability measure on $Y$. 
\end{itemize}
When the action is transitive, of the form $G \acts G/K$ these conditions are further equivalent to the following
\begin{itemize}
\item If $G$ acts continuously on a compact space and $K$ admits an invariant Borel measure, then so does $G$. 
\end{itemize}
\end{definition}
\noindent In the transitive case it is sometimes convenient to adopt group theoretic terminology as follows:\begin{definition}
A subgroup $K$ of a group $G$ is called \emph{co-amenable} if the quasiregular action $G \acts G/K$ is amenable. \end{definition}
\noindent The equivalence of these three conditions is classical. We recall the following 

\begin{remark}
A F\o lner-sequence can be chosen to be increasing (with respect to inclusion).
\end{remark}

As mentioned in the introduction, LERF groups and amenable groups are A-separable, i.e. the co-amenable groups are dense in the space of all subgroups. We now prove Theorem \ref{Asep thm}, giving a characterization of A-separability in the language of generic actions:
\begin{proof}[proof of Theorem \ref{Asep thm}]
For $x\in X$ denote $\Sigma(x) =\{\sigma\in\H{G}\mid G\overset{\sigma}{\acts} \sigma(G)x\text{ is amenable}\}$ and $\Sigma = \cap_{x  \in X} \Sigma(x)$. If $\Sigma$ is generic then $\Sigma(x)$ dense in $\H{G}$ and by Lemma \ref{lem1} the image of this set $\{ G_x(\sigma)\mid \sigma\in\Sigma(x)\}$ is a dense subset of $\Sub{G}$ consisting of co-amenable subgroups.  

Conversely, assume that the set of co-amenable subgroups is dense in $\Sub{G}$ and we wish to prove that the set $\Sigma$ is generic in $\H{G}$. It is enough to show that $\Sigma(x)$ is generic in $\H{G}$ for every $x\in X$. The density of
$\Sigma(x)$ is assured by the hypothesis and Remark \ref{remark1}. To show that $\Sigma(x)$ is $G_{\delta}$, it is enough to show that the condition that a specific finite set $F \subset \sigma(G)x$ is $(\epsilon,\Omega)$-F\o lner is open, where $\epsilon > 0$ and $\Omega \subset G$ is finite. Assume this is the case for some $\sigma \in \H{G}$ we seek an open neighborhood $\sigma \in \mathcal{O} \subset \H{G}$ such that $F$ is still contained in the orbit, and is still F\o lner for every $\phi \in \mathcal{O}$. For every $f \in F$ pick a group element $g_f\in G$ such that $f = \sigma(g_f)x$. Let $F' = F\cup \{x\}$ and $\Omega' = \Omega \cup \{g_f \ | \ f \in F\}$ - the desired neighborhood is given by $\mathcal{O} = \mathcal{O}(\sigma,F',\Omega')$. 
\end{proof}

LERF and amenable groups are not the only examples of A-separable groups. In order to give an example of an A-separable group which is neither LERF nor amenable, we first prove that A-separability is closed to taking free products:

\begin{proposition}\label{Asep product}
Let $G$ and $K$ be countable groups. If $G$ and $K$ are A-separable then so is $G * K$.
\end{proposition}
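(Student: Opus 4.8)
The plan is to run the generic--action characterisation of A-separability (Theorem \ref{Asep thm}) through the canonical identification $\H{G * K} = \H{G} \times \H{K}$, which holds because a permutation representation of a free product is nothing but a pair consisting of a representation of $G$ and a representation of $K$. Under this identification it suffices to prove that
\[ \Sigma = \{(\sigma,\tau)\in\H{G}\times\H{K}\mid\text{every orbit of the associated } G * K\text{-action is amenable}\} \]
is generic. That $\Sigma$ is $G_\delta$ is handled verbatim as in the proof of Theorem \ref{Asep thm}: amenability of the orbit of a fixed $x$ is a countable intersection, over pairs $(\epsilon,\Omega)$, of the open conditions ``some finite $F$ in the orbit is $(\epsilon,\Omega)$-F\o lner'', and one intersects over $x\in X$. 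So the entire content of the proposition lies in the density of $\Sigma$.

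For density I would first isolate a combination principle. If $G\acts Y_1$ and $K\acts Y_2$ are amenable actions with invariant means $m_1,m_2$, then $m_1\otimes m_2$ is a $G\times K$-invariant mean on $Y_1\times Y_2$, so $G\times K\acts Y_1\times Y_2$ is amenable; pulling this back along the canonical surjection $G*K\twoheadrightarrow G\times K$ yields an amenable $G*K$-action, since a mean invariant for the quotient is a fortiori invariant for $G*K$. Concretely, a ``box'' $F_1\times F_2$ (with $F_i$ F\o lner for the factor) is F\o lner for every generator of $G*K$: a generator of $G$ moves only the first coordinate and a generator of $K$ only the second. By A-separability of the factors (Theorem \ref{Asep thm} and Remark \ref{remark1}) I may choose such amenable $G$- and $K$-actions arbitrarily close to any prescribed finite data; these, together with their boxes, are the building blocks for the ``tails''. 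I also record two elementary stability facts: F\o lner sets in an infinite amenable orbit may be chosen disjoint from any prescribed finite set, and an orbit mapping finite-to-one onto an amenable orbit is again amenable.

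To prove density, fix a basic open set $\mathcal{O}((\sigma_0,\tau_0),S,A)$ and enlarge $A$ to a finite \emph{core} $C\supseteq A\cup\bigcup_{s\in S}\big(\sigma_0(s)^{\pm}A\cup\tau_0(s)^{\pm}A\big)$, on which the new action must coincide with $(\sigma_0,\tau_0)$. I would then construct $(\sigma,\tau)$ that keeps this finite core intact while letting every orbit continue, away from $C$, as an embedded copy of the pulled-back product model of the previous paragraph. Each resulting $G*K$-orbit is thus a fixed amenable orbit with a finite core grafted on; its amenability is certified by the box F\o lner sets sitting in the tail, which the generators preserve up to an error supported near the finite core and hence of vanishing relative boundary. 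Agreement on $C$ places $(\sigma,\tau)$ in $\mathcal{O}((\sigma_0,\tau_0),S,A)$, and carrying this out for all orbits at once produces the required point of $\Sigma$; by Lemma \ref{lem1} the induced stabilisers are then co-amenable and dense in $\Sub{G * K}$.

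The hard part will be the gluing in the previous paragraph, namely upgrading the finite partial data on $C$ together with the amenable tails into honest actions of $G$ and of $K$ that respect \emph{all} relations of the two factors. The prescribed behaviour on $C$ is in general non-commutative, whereas the product model forces $G$ and $K$ to commute, so the two pieces cannot be concatenated coordinate-wise; the seam between $C$ and each tail must be arranged so that the relations of each factor survive. I expect to organise this through the Bass--Serre tree of $G*K$ (equivalently, through the Kurosh form of the point stabilisers): assign to the core a finite subtree, attach the amenable tails at its leaves, and verify that the vertex actions --- governed by the A-separable factors $G$ and $K$ --- patch to a global action. Checking the consistency of this patching, and confirming that the tail F\o lner sets genuinely certify amenability of the full mixed orbit despite the non-commutative core, is the technical heart of the argument.
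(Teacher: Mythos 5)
Your framework agrees with the paper's: both exploit the identification $\H{G*K}\cong\H{G}\times\H{K}$, both express amenability of orbits through countably many open ``contains an $(\epsilon,\Omega)$-F\o lner set'' conditions, and both reduce everything to density. But the density argument --- the entire content, as you say yourself --- has a genuine gap at exactly the point you defer to ``the technical heart''. The grafting of a finite non-commuting core onto pulled-back product tails cannot work in the form you propose: an action of $G$ decomposes as a disjoint union of \emph{homogeneous} $G$-sets, so there is no such thing as a $G$-set consisting of a core with an amenable tail ``attached at a leaf''. The moment a core point and a tail point lie in the same $\sigma$-orbit (which is forced, since the $G*K$-orbit of $x$ must reach the tail and the passage happens through one of the factor actions), that whole factor orbit must be a single homogeneous $G$-space, and the product-model structure is destroyed globally on it, not merely near the seam. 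Equivalently, your plan requires extending prescribed finite partial data on $C$, subject to \emph{all} relations of $G$, by prescribed behavior at infinity --- precisely the problem you have not solved --- and the Bass--Serre tree of $G*K$ does not help, because the patching must be performed separately inside each factor's action, where no tree is available. Note also that your pulled-back product models have point stabilizers containing the kernel of $G*K\twoheadrightarrow G\times K$, which is incompatible with generic core data. A smaller inefficiency: by Baire it suffices to handle one $x$ and one triple $(\epsilon,S,T)$ at a time, so your attempt to make ``every orbit continue as a product model all at once'' is both unnecessary and harder than needed.

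The paper sidesteps the gluing problem entirely by a relabeling device. Having chosen, by A-separability, honest actions $\sigma\in\H{G}$ and $\tau\in\H{K}$ agreeing with the given data on $A$ and amenable on every orbit, it splits into two cases. If all relevant factor orbits are finite one truncates to produce a finite $(\phi'*\psi')$-orbit of $x$. Otherwise some infinite amenable $\tau$-orbit $Y$ meets the $G*K$-orbit of $x$; one picks an $(\epsilon,T)$-F\o lner set $F\subset Y$ with $|F|>2(|B|+1)/\epsilon$ and \emph{conjugates} $\sigma$ by an involution $\xi$ swapping $F\minus(B\cup\{y\})$ with fixed points of $\sigma$ (supplied by Lemma \ref{lem2}), with $\xi$ trivial on $B$ and on the trace of a minimal-length word $z$ carrying $x$ into $F$ at the point $y$. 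Since $\phi'=\xi^{-1}\sigma\xi$ is automatically an honest $G$-action, no extension problem ever arises; $F$ becomes $(\epsilon,S)$-almost fixed by $\phi'$ while remaining F\o lner for $\tau$, so no product of F\o lner sets is needed either; and the minimality-of-$z$ trace argument guarantees $F$ stays inside the $(\phi'*\tau)$-orbit of $x$ --- the orbit-connectivity point your sketch leaves unresolved. To repair your proposal you would have to replace the grafting by some such conjugation trick; as written, the proof is incomplete at its crucial step.
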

\begin{proof}
Every element of $\H{G * K}$ is of the form $\phi * \psi$ for $\phi\in\H{G}, \psi\in\H{K}$; where $\phi * \psi$ is defined by setting $\big(\phi * \psi\big)(g) = \phi(g)$ and $\big(\phi * \psi\big)(k) = \psi(k), \forall g\in G, k \in K$ and expending the definition to the free product.

For every $x\in X,\ \epsilon > 0$ and finite subsets $S\subset G,\ T\subset K$ let 
\[\Sigma(x,\epsilon,S,T) = \{ \sigma*\tau\in\H{G * K} \mid \text{the $(\sigma *\tau)$-orbit of $x$ contains an $(\epsilon,S\cup T)$-F\o lner set}\}.\] We want to prove that $\Sigma = \bigcap \Sigma(x,\frac{1}{n},S,T)\ \big( x\in X,n\in\N,S\subset G, T\subset K \text{ finite}\big)$ is generic in $\H{G * K}$ and since $X, G$ and $K$ are countable, it is enough to show that the sets $\Sigma(x,\epsilon,S,T)$ are open and dense for every $x\in X,\ \epsilon > 0$ and finite subsets $S\subset G,\ T\subset K$. The argument that shows that $\Sigma(x,\epsilon,S,T)$ is open was given in the proof of Theorem \ref{Asep thm}.

Fix $x, \epsilon, S$ and $T$ as above. We prove that $\Sigma(x,\epsilon,S,T)$ is dense in $\H{G * K}$. Let $\phi *\psi\in\H{G*K}$ and let $A\subset X$ be finite. We will find $\phi' \in\H{G}$ and $\psi' \in\H{K}$ such that $\phi'(s)a = \phi(s)a,\ \psi'(t)a =\psi(t)a$ for all $s\in S, t\in T, a\in A$ and such that $\phi' *\psi' \in \Sigma(x,\epsilon,S,T)$. We can assume that $x\in A$. By A-separability, there exist $\sigma \in\H{G}$ and $\tau \in \H{K}$ such that $\sigma(s)a = \phi(s)a,\ \tau(t)a =\psi(t)a$ for all $s\in S, t\in T, a\in A$ and such that the actions $G \overset{\sigma}{\acts} X$ and $K\overset{\tau}{\acts}X$ are amenable on every orbit. Let $L := \sigma *\tau(G*K) = \la \sigma(G),\tau(K)\ra$.

\emph{Case 1: all the $\sigma$ and $\tau$ orbits which are contained in $Lx$ are finite}. Let $B\subset Lx$ be a finite, $\sigma$-invariant set containing $A\cap Lx$ and let $C = \bigcup_{b\in B} \tau(K)b$. We define a representation $\phi' \in \H{G}$ by declaring every $c\in C \minus B$ and every element in the $\sigma$-orbit of $c$ to be a fixed point for $\phi'$ and on every other element of $X$, $\phi'(g)$ identifies with $\sigma(g)$ for all $g\in G$. Notice that since $B$ is $\sigma$-invariant, $\phi'(g)$ is well defined and acts that same as $\sigma(g)$ on $B$ for all $g\in G$. In particular, $\phi'(g)$ agrees with $\phi(g)$ on $A$. We have that $C$ is finite, invariant  under both $\phi'$ and $\tau$ and contains $x$. Setting $\psi' = \tau$, the $(\phi'*\psi')$-orbit of $x$ is finite so the orbit itself is an $(\epsilon,S\cup T)$-F\o lner set for $\phi'*\psi'$.

\emph{Case 2: $Lx$ contains an infinite $\sigma$-orbit or an infinite $\tau$-orbit}. Assume, without loss of generality, that $Lx$ contains an infinite $\tau$-orbit $Y$, denote $B = A\cup\big( \bigcup_{s\in S} \sigma(s)A\big)$ and let $F_n$ be an increasing F\o lner-sequence in $Y$ for the $\tau$-action. Since the sets $F_n$ are finite, non of them is $\tau$-invariant and so the F\o lner-sequence does not stabilize. This implies that $|F_n| \to \infty$ and in particular, $Y$ contains an $(\epsilon,T)$-F\o lner set $F$ such that $|F| > \dfrac{2(|B|+1)}{\epsilon}$. Now, let $z\in G*K$ be such that $(\sigma *\tau)(z)x\in F$ and such that $z$ is of minimal length with respect to the canonical presentation: $z = g_n k_n g_{n-1} k_{n-1} \cdots g_1 k_1\ (g_i\in G, k_j\in K, g_1,\dots,g_{n-1},k_2,\dots,k_n \neq 1)$. Denote $y = (\sigma *\tau)(z)x$. By lemma \ref{lem2}, we can assume that $\sigma$ has infinitely many fixed points. In particular, there exits a set $C\subset X$ on which $\sigma(G)$ acts trivially such that $|C| = |F\minus(B\cup\{y\})|$ and such that $C$ does not intersect the finite set $B\cup F\cup \trace{x}{(\sigma *\tau)(z)}$ where we think of $(\sigma *\tau)(z)$ as the word over $X!$ corresponding to the given presentation of $z$. Denote $D = F\minus(B\cup\{y\})$ and let $\xi \in X!$ be a permutation of order $2$ that takes $C$ bijectively onto $D$ and acts trivially on $X\minus (C\cup D)$. We define an action $\phi' \in\H{G}$ by $\phi'(g) = \xi^{-1}\sigma(g)\xi$ for all $g\in G$. Since $\xi$ acts trivially on $B$ we have that $\forall s\in S,\ \forall a\in A: \phi'(s)a = \sigma(s)a = \phi(s)a$ and that every element of $D$ is fixed under $\phi'(s)$ for all $s\in S$, hence:
\[ \forall s\in S: \dfrac{|\phi'(s)F\Delta F|}{|F|} \leq \dfrac{2|F\minus D|}{|F|} \leq \dfrac{2(|B|+1)}{|F|} < \epsilon.\]
Thus $F$ is $(\epsilon,S)$-F\o lner for $\phi'$ and $(\epsilon,T)$-F\o lner for $\psi':=\tau$ and thus $F$ is $(\epsilon,S\cup T)$-F\o lner for $\phi' * \psi'$. Notice that by the minimality of the length of $z$ we have that $\trace{x}{(\sigma *\tau)(z)}\cap F = \{y\}$ and so $\xi$ acts trivially on $\trace{x}{(\sigma *\tau)(z)}$. This means that $(\phi' *\psi')(z)x =y\in F$ and since $F$ is contained in a $\tau$-orbit this implies that $F$ is contained in the $(\phi' *\psi')$-orbit of $x$, as required.

\end{proof}
Recall that the $(m,n)$ Baumslag-Solitar group is the group $BS(m,n) = \la s,t\mid t^{-1}s^m t = s^n\ra$. It is well known that $BS(m,n)$ is solvable (hence amenable) if and only if $m=1$.

\begin{proposition}
For every $n$, the group $BS(1,n)$ is not LERF.
\end{proposition}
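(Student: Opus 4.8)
The plan is to exhibit a single finitely generated subgroup of $BS(1,n)$ that is not closed in the profinite topology; by the definition of LERF this suffices. (For $n=1$ one has $BS(1,1)\cong\Z^2$, which is abelian and hence LERF, so I assume throughout that $n\ge 2$.) The natural candidate is the cyclic subgroup $H=\la s\ra$. I would use the standard fact that $H$ is separable exactly when $H=\overline{H}$, where an element $g$ lies in the profinite closure $\overline{H}$ if and only if $\pi(g)\in\pi(H)$ for every homomorphism $\pi$ from $BS(1,n)$ onto a finite group (indeed $\overline{H}=\bigcap_\pi \pi^{-1}(\pi(H))$). Thus it is enough to produce an element $g\notin H$ with $\pi(g)\in\la\pi(s)\ra$ in every finite quotient.

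I would take $g=tst^{-1}$ and first check that $g\notin H$. The defining relation gives $g^n=(tst^{-1})^n=ts^nt^{-1}=s$, while $s$ has infinite order (it acts as the translation $x\mapsto x+1$ in the standard faithful affine action of $BS(1,n)$ on $\mathbb{Q}$). If $g$ equalled some power $s^k$, then $s^{kn}=g^n=s$ would force $kn=1$, which is impossible for $n\ge 2$; hence $g\notin\la s\ra$.

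The main content is to show $g\in\overline{H}$, that is, $\pi(g)\in\la\pi(s)\ra$ for every finite quotient $\pi$. The key observation is a constraint on orders forced by the relation: since $\pi(t)^{-1}\pi(s)\pi(t)=\pi(s)^n$, the elements $\pi(s)$ and $\pi(s)^n$ are conjugate, hence have equal order. Writing $d=\operatorname{ord}(\pi(s))$, the equality of $d$ with $\operatorname{ord}(\pi(s)^n)=d/\gcd(n,d)$ forces $\gcd(n,d)=1$. Now $\pi(g)=\pi(t)\pi(s)\pi(t)^{-1}$ is conjugate to $\pi(s)$, so it too has order $d$, and $\pi(g)^n=\pi(s)$. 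Choosing an integer $m$ with $nm\equiv 1\pmod d$, I obtain $\pi(g)=\pi(g)^{nm}=(\pi(g)^n)^m=\pi(s)^m\in\la\pi(s)\ra$, as required.

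Combining the two steps yields $g\in\overline{H}\minus H$, so $\overline{H}\ne H$: the finitely generated (cyclic) subgroup $\la s\ra$ is not separable, and therefore $BS(1,n)$ is not LERF. The only delicate point is the coprimality step of the third paragraph, which is exactly where the asymmetry of the relation $t^{-1}st=s^n$ (for $n\ge 2$) enters; once $\gcd(n,d)=1$ is established, the conclusion that $\pi(g)$ is a power of $\pi(s)$ is automatic.
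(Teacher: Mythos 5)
Your proof is correct, and it is in essence the conjugate-dual of the paper's argument. The paper observes that $t^{-1}\la s\ra t = \la s^n\ra \subsetneqq \la s\ra$ and concludes in one line that no element of $\la s\ra \minus \la s^n\ra$ can be separated from the finitely generated subgroup $\la s^n\ra$: in any finite quotient the image of $\la s^n\ra$ is a conjugate of the image of $\la s\ra$ contained inside it, and a proper inclusion between conjugate (hence equinumerous) finite subgroups is impossible. Your witness pair $(H,g) = (\la s\ra,\, tst^{-1})$ is exactly the $t$-conjugate of the paper's pair $(\la s^n\ra,\, s)$, and since separability of a subgroup from an element is invariant under conjugation, the two examples are interchangeable; what differs is the verification. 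Where the paper invokes the cardinality collapse of nested conjugate subgroups, you run the same computation at the level of element orders: conjugacy of $\pi(s)$ and $\pi(s)^n$ forces $\gcd(n,d)=1$ for $d=\operatorname{ord}(\pi(s))$, whence $\pi(g)=\pi(s)^m$ for any $m$ with $nm\equiv 1 \pmod d$. This is slightly longer but makes the finite-quotient arithmetic completely explicit. Your writeup is also more careful on two points the paper leaves implicit: you verify $g\notin\la s\ra$ honestly (via $g^n=s$ and the infinite order of $s$ in the affine model), and you flag that the statement needs $n\ge 2$ (really $|n|\ge 2$), since $BS(1,1)\cong\Z^2$ is LERF and the paper's proper containment $\la s^n\ra\subsetneqq\la s\ra$ likewise fails for $n=1$.
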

\begin{proof}
Write: $BS(1,n) = \la s,t\mid t^{-1}s t = s^n\ra$ and notice that $t^{-1}\la s\ra t = \la s^n\ra \subsetneqq \la s\ra$. Thus, an element of $\la s\ra \minus t^{-1}\la s\ra t$ cannot be separated from $t^{-1}\la s\ra t$ by a homomorphism into a finite group.
\end{proof}

\begin{corollary}
There exist non-LERF, non-amenable A-separable groups.
\end{corollary}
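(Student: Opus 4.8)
The plan is to exhibit one explicit group realizing all three properties, assembled from the pieces already in hand. The natural candidate is a free product in which one factor is amenable but not LERF. Fix $n \ge 2$ and set $G = BS(1,n) * \Z$ (the choice $BS(1,n) * BS(1,n)$ works just as well). The idea is that $BS(1,n)$ is solvable, hence amenable, so it contributes A-separability while still being non-LERF; forming a free product with another infinite group destroys amenability but, by Proposition \ref{Asep product}, preserves A-separability.

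First I would verify A-separability. Since $BS(1,n)$ is solvable it is amenable (as recalled in the excerpt), and $\Z$ is abelian hence amenable; every amenable group is A-separable by Theorem \ref{basic_prop}. Proposition \ref{Asep product} then yields that the free product $G = BS(1,n) * \Z$ is A-separable.

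Next I would rule out the other two properties. For non-amenability I would invoke the elementary classification of amenable free products: a free product of two nontrivial groups is amenable only when it is the infinite dihedral group $\Z/2\Z * \Z/2\Z$; since both factors of $G$ are infinite, $G$ contains a nonabelian free subgroup and is non-amenable. For non-LERF I would use that the LERF property passes to subgroups --- if $\Gamma$ is LERF and $H \le \Gamma$, then intersecting with $H$ the finite-index subgroups of $\Gamma$ that separate a finitely generated $K \le H$ from an element $g \in H \minus K$ exhibits $H$ as LERF. As $G$ contains the non-LERF group $BS(1,n)$ (the preceding proposition) as a free factor, $G$ cannot be LERF.

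I expect no genuine obstacle here: once the candidate is fixed, the verification reduces to quoting Proposition \ref{Asep product}, Theorem \ref{basic_prop}, and the two standard structural facts about free products. The only ingredient not recorded verbatim in the excerpt is the heredity of LERF under passage to subgroups, and that is the one-line argument sketched above.
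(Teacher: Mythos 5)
Your proposal is correct and follows essentially the same route as the paper, which takes $BS(1,n) * BS(1,n)$ (your alternative candidate) and likewise combines Proposition \ref{Asep product} with amenability of $BS(1,n)$, heredity of LERF under subgroups, and the presence of a free subgroup to rule out amenability. Your substitution of $\Z$ for the second free factor and your explicit one-line verification that LERF passes to subgroups (which the paper only cites) are the sole, inessential differences.
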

\begin{proof}
Let $G = BS(1,n)$ for some $n$. $G$ is amenable hence A-separable and so, by Proposition \ref{Asep product}, $G * G$ is A-separable. On the other hand, $G * G$ is not LERF since $G$ is not LERF and LERF passes to subgroups. $G * G$ is also not amenable since it contains a free subgroup on two generators.
\end{proof}

To conclude all the statements promised in the introduction we prove the following:
\begin{proposition}
A group $G$ with Kazhdan property (T) is A-separable if and only if it is LERF. 
\end{proposition}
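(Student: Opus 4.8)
The plan is to reduce the statement to Theorem~\ref{LERF thm} by establishing that, under property (T), a subgroup of $G$ is co-amenable if and only if it has finite index. Granting this, the set of co-amenable subgroups coincides with the set of finite-index subgroups, so A-separability—the density of the co-amenable subgroups in $\Sub{G}$—becomes \emph{literally} condition (2) of Theorem~\ref{LERF thm}. Since an infinite group with property (T) is automatically finitely generated, the hypotheses of Theorem~\ref{LERF thm} are satisfied and that theorem equates condition (2) with the LERF property, yielding one implication. The reverse implication, LERF $\implies$ A-separable, requires no property (T) and is already recorded in Theorem~\ref{basic_prop}.

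The heart of the argument is thus the following claim: \textbf{if $G$ has property (T) and $K < G$ is co-amenable, then $[G:K] < \infty$.} To prove it I would pass to unitary representations. By definition co-amenability of $K$ means that $G \acts G/K$ is amenable, i.e. $G/K$ admits $(\epsilon,\Omega)$-F\o lner sets for all $\epsilon$ and all finite $\Omega$. Normalizing the indicator functions of such F\o lner sets produces $(\epsilon,\Omega)$-almost-invariant unit vectors in $\ell^2(G/K)$, which is exactly the assertion that the trivial representation $1_G$ is weakly contained in the quasiregular representation $\lambda_{G/K}$ of $G$ on $\ell^2(G/K)$. Property (T) states precisely that weak containment of $1_G$ upgrades to genuine containment, so $\ell^2(G/K)$ carries a nonzero $G$-invariant vector. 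An invariant vector in $\ell^2(G/K)$ is a function constant on the single orbit $G/K$ and square-summable; a nonzero such function can only exist when $G/K$ is finite, giving $[G:K] < \infty$.

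With the claim in hand the proof closes quickly in both directions. If $G$ is A-separable, the co-amenable subgroups are dense in $\Sub{G}$; by the claim these are exactly the finite-index subgroups, so the finite-index subgroups are dense, condition (2) of Theorem~\ref{LERF thm} holds, and $G$ is LERF. Conversely, if $G$ is LERF then by Theorem~\ref{basic_prop} it is A-separable. Note that every finite-index subgroup is co-amenable regardless of property (T), so the claim is really a one-way collapse: property (T) forbids \emph{proper} co-amenable subgroups beyond the finite-index ones.

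The step I expect to be the main obstacle is the translation between the F\o lner/invariant-mean definition of co-amenability used in this paper and the spectral condition $1_G \prec \lambda_{G/K}$, together with the correct invocation of property (T). One must be careful that the relevant weak containment is in the \emph{quasiregular} representation on $\ell^2(G/K)$, not in the full regular representation of $G$, and that property (T) is applied to this representation. Everything else—deducing finite index from the existence of an invariant $\ell^2$-vector, and using that property (T) forces finite generation—is standard and amounts to bookkeeping.
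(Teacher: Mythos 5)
Your proposal is correct and follows essentially the same route as the paper: both hinge on the fact that under property (T) a transitive action $G \acts G/K$ is amenable only when $G/K$ is finite, proved by turning normalized indicator functions of F\o lner sets into almost-invariant vectors in $\ell^2(G/K)$ and invoking (T) (your weak-containment phrasing and the paper's Kazhdan-constants phrasing are the same argument), after which an invariant vector must be a nonzero constant function on the transitive orbit, forcing finiteness. The paper leaves the reduction to condition (2) of Theorem~\ref{LERF thm} implicit where you spell it out, but there is no substantive difference.
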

\begin{proof}
This follows directly form the fact that a transitive action $G \acts G/H$ is amenable if and only if $G/H$ is finite. The argument for that follows directly from property (T). If this action is amenable and $F \subset G/H$ is an $(K,\epsilon)$ F\o lner set then $1_{F} \in \ell^2(G/H)$ is a $(K,\epsilon)$-almost invariant vector. Taking $(K,\epsilon)$ to be Kazhdan constants for $G$ we can deduce the existence of a non-zero invariant vector $f \in \ell^2(G/H)$. Since the action of $G$ on $G/H$ is transitive $f$ must be a constant function. But a non-zero constant function is in $\ell^2$ if and only if $G/H$ is finite. 
\end{proof}
We conjecture that even this cannot happen in a non-trivial way namely
\begin{conjecture}
A countable group $G$ with Kazhdan property (T) is LERF if and only if it is finite. 
\end{conjecture}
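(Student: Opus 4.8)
\noindent\emph{Approach to the conjecture.} One implication is trivial, since finite groups are LERF; the content is the converse, that an infinite group with property (T) is never LERF. Property (T) forces $G$ to be finitely generated, and, being infinite, non-amenable. By the preceding Proposition the co-amenable subgroups of $G$ are exactly the finite-index ones, so in view of Theorem \ref{LERF thm} a property (T) group is LERF if and only if it is A-separable. We may further assume $G$ is residually finite: LERF implies residual finiteness, so for the (existing) non-residually-finite property (T) groups the conjecture already holds.

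A clean, fully elementary obstruction to LERF is the presence of a finitely generated subgroup $H \le G$ and an element $g \in G$ with $gHg^{-1} \subsetneq H$. If such a pair exists, then the finitely generated subgroup $gHg^{-1}$ is not separable: for any homomorphism $\pi \colon G \to F$ onto a finite group, $\pi(gHg^{-1})$ and $\pi(H)$ are conjugate finite subgroups with $\pi(gHg^{-1}) \le \pi(H)$, hence equal by counting, so any $h_0 \in H \minus gHg^{-1}$ has $\pi(h_0) \in \pi(H) = \pi(gHg^{-1})$ and is not separated from $gHg^{-1}$ in any finite quotient. This is precisely why $BS(1,n)$ fails to be LERF in the Proposition above. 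The plan is therefore to locate such a configuration — or any other sufficient LERF-obstruction — inside every infinite property (T) group.

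As evidence and a first target, the prototypical infinite property (T) groups are the higher-rank arithmetic lattices, and these are known to fail LERF because the congruence subgroup property collapses their profinite topology onto the congruence topology, in which many finitely generated subgroups are not closed. I would first reprove non-LERF for all linear property (T) groups along these lines, settling the classical cases, and then try to distill from them a mechanism not tied to linearity.

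The main obstacle is exactly this last step: producing a sufficient obstruction from property (T) \emph{alone}. Property (T) does not obviously supply a distorted or contracted finitely generated subgroup, and a hypothetical infinite (T) group in which every finitely generated subgroup were conjugacy-stable (that is, $gHg^{-1} \subseteq H \Rightarrow gHg^{-1} = H$) and profinitely closed would evade the argument; excluding this is the crux and is, I expect, why the statement remains conjectural. A purely representation-theoretic attack — approximating an infinite-index $H$ by finite-index $K_n \to H$ in $\Sub{G}$, passing to the probability-measure-preserving actions $G \acts G/K_n$, and exploiting the uniform Kazhdan spectral gap on their mean-zero subspaces — appears to recover only the already-known fact that the limit $H$ is non-co-amenable, the spectral gap being too coarse to detect which infinite-index subgroups are Chabauty limits of finite-index ones. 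Combining this expander picture with finer rigidity of $\Sub{G}$, such as constraints on invariant random subgroups or on commensurated subgroups of property (T) groups, seems the most plausible path to the general case.
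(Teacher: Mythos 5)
You were asked to prove a statement that the paper itself does not prove: it is posed there as an open conjecture, offered without argument immediately after the proposition on property (T). So the benchmark is not whether you match the paper's proof (there is none), but whether you have one, and by your own admission you do not: everything hinges on the final step of extracting a LERF-obstruction from property (T) alone, which you explicitly leave open. The parts of your proposal that can be checked are correct. Finite $\Rightarrow$ LERF is trivial; property (T) forces finite generation and, for infinite $G$, non-amenability; the paper's final proposition does show that under (T) the co-amenable subgroups are exactly the finite-index ones, so LERF $\Leftrightarrow$ A-separability for (T) groups; the reduction to residually finite $G$ is valid, since a non-residually-finite group already fails to separate the trivial (finitely generated) subgroup; and the conjugation obstruction --- a finitely generated $H$ with $gHg^{-1} \subsetneqq H$ --- is a correct sufficient condition for non-LERF by the counting argument you give, which is precisely the paper's own argument for $BS(1,n)$. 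None of this, however, touches the converse direction, and you correctly identify that a hypothetical infinite (T) group in which every finitely generated subgroup is profinitely closed is not excluded by any of these tools.

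Two concrete cautions about the programme you sketch. First, the plan to ``reprove non-LERF for all linear property (T) groups'' via the congruence subgroup property overreaches: CSP is known (and expected) only for higher-rank arithmetic lattices, whereas the rank-one lattices in $\mathrm{Sp}(n,1)$ and $F_4^{(-20)}$ are linear property (T) groups for which the congruence mechanism is unavailable and LERF is, as far as we know, genuinely open --- these, not $\mathrm{SL}_n(\Z)$-type lattices, are the hard test cases; moreover, linear (T) groups need not be lattices at all (e.g.\ thin subgroups). Second, your diagnosis of why the spectral-gap approach stalls is accurate and worth making precise: the Kazhdan gap applied to $\ell^2(G/K_n)$ only rules out infinite-index subgroups being co-amenable, i.e.\ it recovers the proposition already proved in the paper, while the conjecture is exactly the finer question --- via the equivalence in Theorem \ref{LERF thm} between LERF and density of the finite-index subgroups in $\Sub{G}$ --- of which infinite-index subgroups arise as Chabauty limits of finite-index ones, and the gap is blind to that. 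In short: your status assessment and partial reductions are sound and honestly flagged, but there is no proof here, consistent with the statement remaining a conjecture in the paper.
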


This work was written while Y.G. was on sabbatical at the University of Utah. I am  very grateful to hospitality of the math department there and to the NSF grants that enabled this visit. Y.G. acknowledges support from U.S. National Science Foundation grants DMS 1107452, 1107263, 1107367 ``RNMS: Geometric structures And Representation varieties" (the GEAR Network). Both authors were partially supported by the Israel Science Foundation grant ISF 441/11.

\end{document}